\newcommand{\define}{\textbf}
\newcommand{\excise}[1]{}
\newcommand{\isom}{\cong}
\renewcommand{\setminus}{\smallsetminus}
\renewcommand{\phi}{\varphi}
\renewcommand{\tilde}{\widetilde}
\renewcommand{\bar}{\overline}
\newcommand{\BB}{\mathbb{B}}
\newcommand{\CC}{\mathbb{C}}
\newcommand{\EE}{\mathbb{E}}
\newcommand{\NN}{\mathbb{N}}
\newcommand{\PP}{\mathbb{P}}
\newcommand{\ZZ}{\mathbb{Z}}
\newcommand{\qq}{\mathbf{q}}   
\newcommand{\dd}{\mathbf{d}}   
\newcommand{\pt}{\mathrm{pt}}  
\newcommand{\Mbar}{\bar{M}}     
\newcommand{\bdy}{\partial\Mbar}
\newcommand{\qtp}{\circ}        
\newcommand{\ev}{\mathrm{ev}}   
\newcommand{\Bb}{\mathcal{B}}
\newcommand{\Gg}{\mathcal{G}}
\newcommand{\Mm}{\mathcal{M}}
\newcommand{\bbdy}{\partial\Mmbar}
\newcommand{\Mmbar}{\bar{\mathcal{M}}}
\newcommand{\Oo}{\mathcal{O}}  
\newcommand{\Uu}{\mathcal{U}}
\newcommand{\Ww}{\mathcal{W}}
\newcommand{\Xx}{\mathcal{X}}
\newcommand{\Yy}{\mathcal{Y}}
\newcommand{\Zz}{\mathcal{Z}}
\DeclareMathOperator{\codim}{codim}
\DeclareMathOperator{\Hom}{Hom}
\newtheorem{theorem}{Theorem}[section]
\newtheorem{lemma}[theorem]{Lemma}
\newtheorem{proposition}[theorem]{Proposition}
\theoremstyle{definition}
\newtheorem{definition}[theorem]{Definition}
\newtheorem{remark}[theorem]{Remark}
\begin{document}

\title{Positivity of equivariant Gromov-Witten invariants}
\author{Dave Anderson}
\address{Department of Mathematics\\University of Washington\\Seattle, WA 98195}
\email{dandersn@math.washington.edu}
\author{Linda Chen}
\address{Department of Mathematics and Statistics\\Swarthmore College\\Swarthmore, PA 19081}
\email{lchen@swarthmore.edu}
\keywords{}
\date{December 3, 2011}
\thanks{DA was partially supported by NSF Grant DMS-0902967.  LC was partially supported by NSF Grant DMS-0908091 and NSF Grant DMS-1101625.}

\begin{abstract}
We show that the equivariant Gromov-Witten invariants of a projective homogeneous space $G/P$ exhibit Graham-positivity: when expressed as polynomials in the positive roots, they have nonnegative coefficients.
\end{abstract}

\maketitle

\section{Introduction}\label{s:intro}

Let $X=G/P$ be a projective homogeneous variety, for a complex reductive Lie group $G$ and parabolic subgroup $P$.  Fix a maximal torus and Borel subgroup $T \subset B \subseteq P$, and let $\Delta = \{\alpha_1,\ldots,\alpha_n\}$ be the corresponding set of simple roots, making the roots of $B$ positive.  Let $W_P\subseteq W$ be the Weyl groups for $P$ and $G$, respectively.  Let $B^-$ be the opposite Borel subgroup.  The classes of the \emph{Schubert varieties} $X(w) = \bar{BwP/P}$ and \emph{opposite Schubert varieties} $Y(w) = \bar{B^-wP/P}$ give Poincar\'e dual bases of the equivariant cohomology ring $H_T^*X$, as $w$ ranges over the set $W^P$ of minimal coset representatives for $W/W_P$.
%
%
Write $x(w)=[X(w)]^T$ and $y(w)=[Y(w)]^T$ for these classes.

A positivity property for multiplication in these bases was proved by Graham:
\begin{theorem}[{\cite{graham}}]\label{t:graham}
Writing
\[
  y(u)\cdot y(v) = \sum_w c_{u,v}^w\, y(w)
\]
in $H_T^*X$, the coefficient $c_{u,v}^w$ lies in $\NN[\alpha_1,\ldots,\alpha_n]$.
\end{theorem}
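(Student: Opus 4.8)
The plan is to exhibit the product $y(u)\cdot y(v)$ geometrically as the class of a subvariety invariant under the opposite Borel $B^-$, and then to deduce that any such class expands in the Schubert basis with coefficients in $\NN[\alpha_1,\ldots,\alpha_n]$. Since the $c_{u,v}^w$ are by definition the coordinates of $y(u)\cdot y(v)$ in the basis $\{y(w)\}$, this is exactly what must be shown.

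First I would produce a geometric representative of the product. The classes $y(u)=[Y(u)]^T$ and $y(v)=[Y(v)]^T$ are represented by $B^-$-invariant subvarieties, but two $B^-$-invariant cycles rarely meet properly, so the product is not computed by $Y(u)\cap Y(v)$. By Kleiman transversality $Y(u)\cap g\,Y(v)$ is proper of the expected dimension for generic $g\in G$ and represents $y(u)\cdot y(v)$ nonequivariantly. To retain the torus action I would instead assemble the whole family: form the closure $\mathcal Z$ of $\{(x,g):x\in Y(u)\cap g\,Y(v)\}$ inside $(G/P)\times\overline U$ for a $T$-equivariant compactification $\overline U$ of the unipotent radical $U\subset B$, and take the flat limit of the fibres as $g\to 1$. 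This should produce a $T$-invariant — in fact $B^-$-invariant — cycle $W$ with $[W]^T=y(u)\cdot y(v)$.

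It then remains to prove the following instance of Graham's general positivity principle, which I would isolate as the \emph{Key Lemma}: if $S=TU'$ is a connected solvable group with maximal torus $T$, unipotent radical $U'$, and $T$-weights $\chi_1,\ldots,\chi_r$ on $\mathrm{Lie}(U')$, acting on a smooth variety $M$, then for every $S$-invariant subvariety $Z\subseteq M$ the class $[Z]^T\in H_T^*(M)$ is a nonnegative integer combination of monomials $\prod_j(-\chi_j)^{a_j}$ times classes of $S$-invariant subvarieties. Applied to $M=G/P$ and $S=B^-$, whose weights $\chi_j$ are the negative roots, the $S$-invariant irreducible subvarieties are exactly the $Y(w)$, and each $-\chi_j$ is a positive root, so this specializes to $[W]^T=\sum_w d_w\,y(w)$ with $d_w\in\NN[\alpha_1,\ldots,\alpha_n]$, as required.

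I would prove the Key Lemma by induction on $\dim U'$, peeling off a $T$-stable one-dimensional central subgroup $U_\chi\cong\GG_a$ of weight $\chi$. The mechanism is to compactify this $\GG_a$ to a $\PP^1$, forming a $T$-equivariant $\PP^1$-family whose total space carries the smaller solvable group $S/U_\chi$; pushing forward along the $\PP^1$ direction expresses $[Z]^T$ in terms of classes on the quotient, and the fibre integral should contribute a nonnegative multiple of the single weight $-\chi$, after which the inductive hypothesis for $S/U_\chi$ finishes the step. The main obstacle is precisely this one-parameter analysis: one must show that the relevant flat limits are again honest subvarieties (so the classes are represented geometrically), that the $\PP^1$-pushforward introduces no cancellation, and that the only weights entering the degeneration are the $-\chi_j$. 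Controlling these signs in the single-root computation is where all of the genuine content lies; granting it, the reduction above yields Graham-positivity of the $c_{u,v}^w$.
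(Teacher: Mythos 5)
Your proposal follows the outline of Graham's original argument (the paper itself does not prove Theorem~\ref{t:graham}; it quotes it from \cite{graham}, and inside this paper it arises as the case $\dd=0$, $r=2$ of Theorems~\ref{t:main} and~\ref{t:count}, where $\Mbar_{0,3}(X,0)\isom X$). The problem is that your first step is not merely unproved --- it is impossible, and this is fatal. Suppose $W\subseteq X$ were an effective $B^-$-invariant cycle with $[W]^T=y(u)\cdot y(v)$. Since $B^-$ is connected and acts on $X=G/P$ with finitely many orbits, every irreducible component of $W$ is a $B^-$-orbit closure, i.e.\ some $Y(w)$; your Key Lemma is then vacuous for $W$ (each component is already $S$-invariant and enters with coefficient $1$), and you would conclude $y(u)\cdot y(v)=\sum_w n_w\,y(w)$ with $n_w\in\NN$. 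That is false: already in rank one, $y(s)\cdot y(s)=\alpha\,y(s)$ on $X=\PP^1$; more drastically, $y(u)\cdot y(v)$ is always a nonzero class (restrict to the $B^-$-fixed point $w_0P$), of degree $2(\ell(u)+\ell(v))$, which for most $u,v$ exceeds $2\dim X$ --- a degree in which $X$ carries no cycles at all. The flat-limit construction cannot evade this: the family $\{(x,g):x\in Y(u)\cap g\,Y(v)\}$ is stable only under the $T$-action that acts on the base $\bar{U}$ by conjugation, so fibers over $g\neq 1$ are not $T$-invariant, and specialization over such a base computes ordinary, not equivariant, rational equivalence. The limit at $g=1$ represents the nonequivariant product --- which is $0$ whenever $\ell(u)+\ell(v)>\dim X$, the generic intersections being empty by Kleiman --- and not the equivariant one. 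This is precisely why equivariant positivity is hard: the polynomial coefficients are invisible on $X$ itself, and every correct proof works on a larger space.

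Your Key Lemma also has the wrong sign. As literally stated (for $S$-invariant $Z$) it is trivially true and contentless. The nontrivial statement Graham proves expands the class of a \emph{$T$-invariant}, not necessarily $S$-invariant, cycle in terms of $S$-invariant ones, with coefficients in $\NN[\chi_1,\ldots,\chi_r]$ --- monomials in the weights of $\mathrm{Lie}(U')$ themselves, not their negatives. One checks this on $\PP^1$: $x(e)=y(s)+(-\alpha)\cdot 1$, so expanding $T$-invariant cycles in the $B^-$-invariant basis produces \emph{negative} roots. Hence the route ``invariant cycle for the product, then solvable expansion with $S=B^-$'' could at best land in $\NN[-\alpha_1,\ldots,-\alpha_n]$. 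In a correct run of Graham's strategy the positive roots come from the opposite side: in essence one expands the diagonally embedded Schubert varieties $\{(x,x):x\in X(w)\}\subset X\times X$ (invariant under the diagonal $B$ but not under $B\times B$) in the classes $x(a)\times x(b)$, using the solvable group $T\ltimes(U\times U)$ whose weights are positive roots, and then extracts $c_{u,v}^w$ from the duality $\pi^T_*(y(a)\cdot x(b))=\delta_{a,b}$; the product $y(u)\cdot y(v)$ is never represented by a cycle on $X$. The paper's own method is a second correct route: on the approximation space $\Xx=\EE\times^T X$, the coefficient of $\alpha^J$ in $c_{u,v}^w$ is the degree of an effective zero-cycle cut out by generic translates under the mixing group $\Gamma$ (Theorem~\ref{t:count} with $\dd=0$, $r=2$, following \cite{anderson}), with positivity coming from effectivity of cycles over the base $(\PP^{m-1})^n$ --- note that here too the big group $\Gamma$ integrates the \emph{positive} root directions, since only the $\Oo(\alpha)$ with $\alpha>0$ have sections.
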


Following \cite{kim-eq}, the \emph{equivariant Gromov-Witten invariants} are defined as follows.  Let $\dd \in H_2(X,\ZZ)$ be an effective class; taking the basis of Schubert curves $x(s_\alpha)$, one can identify $\dd$ with a tuple of nonnegative integers $(d_1,\ldots,d_k)$.  Let $\Mbar = \Mbar_{0,r+1}(X,\dd)$ denote the Kontsevich moduli space of stable maps.  This comes with $r+1$ \emph{evaluation maps} $\ev_i:\Mbar \to X$, as well as the standard map $\pi\colon \Mbar \to \pt$.

\begin{definition}
The \define{equivariant Gromov-Witten invariant} associated to classes $\sigma_1,\ldots,\sigma_{r+1}$ is
\[
  I^T_\dd(\sigma_1\cdots\sigma_{r+1}) := \pi^T_*( \ev_1^*\sigma_1 \cdots \ev_{r+1}^*\sigma_{r+1} )
\]
in $H_T^*(\pt)$, where $\pi^T_*$ is the equivariant pushforward $H_T^*\Mbar \to H_T^*(\pt)$.
\end{definition}

When $r=2$, these define \emph{equivariant quantum Littlewood-Richardson (EQLR) coefficients}:
\[
  c_{u,v}^{w,\dd} = I^T_\dd( y(u)\cdot y(v) \cdot x(w) ).
\]
The EQLR coefficients were shown to be Graham-positive, in the sense of Theorem~\ref{t:graham}, by Mihalcea in \cite{mihalcea-positivity}.  Remarkably, they define an associative product in the \emph{equivariant (small) quantum cohomology ring} $QH_T^*X$, via
\[
  y(u)\qtp y(v) = \sum_{w,\dd} \qq^\dd\, c_{u,v}^{w,\dd}\, y(w),
\]
so Mihalcea's result is a generalization of Graham's to the setting of equivariant quantum Schubert calculus.

In this note, we will show that the multiple-point equivariant Gromov-Witten invariants are Graham-positive:
\begin{theorem}\label{t:main}
For any elements $v_1,\ldots,v_r,w \in W^P$, the equivariant Gromov-Witten invariant
\[
  I^T_\dd(y(v_1)\cdots y(v_r) \cdot x(w)) 
\]
lies in $\NN[\alpha_1,\ldots,\alpha_n]$.
\end{theorem}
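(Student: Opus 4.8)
The plan is to realize the invariant as the $T$-equivariant class, pushed to a point, of an explicit Gromov--Witten subvariety of $\Mbar$, and then to apply Graham's positivity theorem. Let $\ev=(\ev_1,\dots,\ev_{r+1})\colon\Mbar\to X^{r+1}$ be the total evaluation map and $\rho\colon X^{r+1}\to\pt$; by the projection formula,
\[
  I^T_\dd(y(v_1)\cdots y(v_r)\cdot x(w))=\rho^T_*\big(\ev^T_*[\Mbar]^T\cdot(y(v_1)\boxtimes\cdots\boxtimes y(v_r)\boxtimes x(w))\big).
\]
The right-hand side is computed by a Gromov--Witten subvariety
\[
  N=\ev_1^{-1}(Y(v_1))\cap\cdots\cap\ev_r^{-1}(Y(v_r))\cap\ev_{r+1}^{-1}(X(w)),
\]
and the first step is a Kleiman--Bertini argument: moving the $r+1$ Schubert varieties by a general element of $G^{r+1}$, which acts transitively on $X^{r+1}$, makes the intersection proper and generically reduced, so that the product of pullback classes equals the class of an effective cycle of the expected dimension. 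Here it is important that $G/P$ is convex, so that $\Mbar$ is irreducible of the expected dimension and a smooth Deligne--Mumford stack on which the $\ev_i$ are flat.

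Next I would record the invariance of the defining conditions: since each $Y(v_i)$ is $B^-$-invariant and the $\ev_i$ are $G$-equivariant, the subvarieties $\ev_i^{-1}(Y(v_i))$ are $B^-$-invariant, while $\ev_{r+1}^{-1}(X(w))$ is $B$-invariant. Thus the configuration consists of $r$ opposite-Schubert conditions and a single Schubert condition --- structurally identical to the $r=2$ case treated by Mihalcea in \cite{mihalcea-positivity}, which already has two $B^-$-invariant conditions and one $B$-invariant condition. Consequently I expect Mihalcea's geometric argument to extend essentially verbatim: one reconciles the mixed-Borel situation exactly as in his proof and in Graham's proof of Theorem~\ref{t:graham}, replacing the configuration by one invariant under a single solvable group $B'$ whose maximal torus acts on $\mathrm{Lie}(B')$ through roots in the positive cone, without changing the class in $H^*_T\Mbar$. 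Graham's positivity theorem then shows that the equivariant class of this effective, $B'$-invariant cycle, pushed forward to $\pt$, lies in $\NN[\alpha_1,\dots,\alpha_n]$; this is the same mechanism by which the $y(w)$, as classes of the $B^-$-invariant $Y(w)$, and their products are Graham-positive.

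The main obstacle is geometric rather than formal. One must verify the transversality and expected-dimension statements for the multiple-point Gromov--Witten variety $N$ on the non-homogeneous space $\Mbar$ --- so that the product of pullback classes is genuinely represented by an effective cycle with no cancellation --- and confirm that Graham's hypotheses apply, which requires working with the rationally smooth coarse space of $\Mbar$ (or an equivariant smooth presentation) and the compatibility of its $G$-action with the $\ev_i$. By contrast, a naive reduction to Mihalcea's three-point case is not available: the higher-point genus-zero invariants are not expressible through three-point invariants in general, since $H^*(G/P)$ need not be generated by divisors and the boundary relations on $\Mbar$ yield WDVV-type identities rather than a closed formula. This is why the direct geometric route, generalizing Mihalcea's, is the natural one.
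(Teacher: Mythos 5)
Your proposal takes a different route from the paper, and at its central step it has a genuine gap rather than a proof. The first problem is the Kleiman--Bertini step: translating the Schubert varieties $Y(v_i)$ and $X(w)$ by a general element of $G^{r+1}$ destroys their $T$-invariance, so the translated subvarieties no longer represent the equivariant classes $y(v_i)$ and $x(w)$ at all. The resulting proper intersection only computes the \emph{ordinary} Gromov--Witten invariant, i.e.\ the constant term of the polynomial $I^T_\dd(y(v_1)\cdots y(v_r)\cdot x(w))$; it says nothing about the higher coefficients, which are the whole point of the theorem. This tension --- general translates give transversality but kill $T$-invariance, while insisting on invariant representatives forecloses general translates --- is exactly the difficulty any equivariant positivity proof must overcome, and your outline defers it entirely to the claim that Mihalcea's argument ``extends essentially verbatim.'' That claim is plausible (the paper itself remarks that an inspection of Mihalcea's proof suggests his method works for these invariants), but it is not a proof: the step where the mixed $B$-/$B^-$-invariant configuration is replaced by a single effective cycle, invariant under one solvable group $B'$ and representing the same class in $H_T^*\Mbar$, is precisely where Graham's and Mihalcea's arguments do their real work. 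You neither carry out that construction nor verify Graham's hypotheses for $\Mbar$; indeed you explicitly flag both as ``the main obstacle'' and leave them open.

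For contrast, the paper avoids Graham's theorem altogether. It passes to a finite-dimensional approximation $\Mmbar$ of the Borel construction, where equivariant classes become ordinary classes, and extracts each coefficient $c_J$ of the polynomial by capping with $[\Mmbar_J]$, the preimage of a linear subspace $\BB_J\subseteq\BB=(\PP^{m-1})^n$. Transversality is then supplied by Kleiman's theorem in Speiser's form (valid for non-transitive actions once the relevant subvarieties are dimensionally transverse to orbits), applied to the connected mixing group $\Gamma^{(r)}=\Gamma_0^r\rtimes(PGL_m)^n$, whose orbit closures in $\Xx\times_\BB\cdots\times_\BB\Xx$ are the products of Schubert bundles. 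Because $\Gamma^{(r)}$ is connected, translating by a general $\gamma\in\Gamma^{(r)}$ preserves ordinary cohomology classes, so nothing is lost by moving $\Yy=\Yy(v_1)\times_\BB\cdots\times_\BB\Yy(v_r)$; the coefficient $c_J$ then becomes literally the degree of the effective zero-cycle $V_\gamma=\gamma\Yy\times_{\Xx^r}\Zz_J$, hence nonnegative. If you want to salvage your plan, you must either carry out Mihalcea's reduction in detail --- actually producing the $B'$-invariant representative on $\Mbar$ and checking Graham's hypotheses there --- or adopt a mechanism like the mixing-space one, which is what reconciles ``general translate'' with ``equivariant class.''
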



Associativity of the equivariant quantum ring $QH_T^*X$ defines (generalized) EQLR coefficients $c_{v_1,\ldots,v_r}^{w,\dd}$:
\[
  y(v_1)\qtp\cdots\qtp y(v_r) = \sum_{w,\dd} \qq^\dd\, c_{v_1,\ldots,v_r}^{w,\dd}\, y(w).
\]
By induction using the $r=2$ case of Theorem~\ref{t:main}, it follows that these EQLR coefficients are also Graham-positive; indeed, the associativity relations are subtraction-free.  This gives a new proof of Mihalcea's positivity theorem.  For $r>2$, however, the EQLR coefficients $c_{v_1,\ldots,v_r}^{w,\dd}$ are not the same as the equivariant Gromov-Witten invariants 
in Theorem \ref{t:main}.

The proof of Theorem~\ref{t:main} is given in \S\ref{s:transverse}; the idea is to represent the coefficients of this polynomial as degrees of effective zero-cycles, using a transversality argument (Theorem~\ref{t:count}).  An inspection of Mihalcea's proof of positivity for EQLR coefficients suggests that his method should also work for Gromov-Witten invariants, but we find our geometric interpretation of the coefficients appealing.  Moreover, we use the dimension estimates from \S\ref{s:transverse} to derive a Giambelli formula for $QH_T^*(SL_n/P)$ in \cite{ac}.

\begin{remark}
As in \cite{graham}, there is a corresponding positivity theorem with the roles of positive and negative roots interchanged: the Gromov-Witten invariants $I^T_\dd(x(v_1)\cdots x(v_r)\cdot y(w))$ lie in $\NN[-\alpha_1,\ldots,-\alpha_n]$.  All the arguments proceed in exactly the same manner.  In fact, it is this version (for $r=2$) that is treated in \cite{mihalcea-positivity}.
\end{remark}

\smallskip
\noindent
{\it Acknowledgements.}  We thank Leonardo Mihalcea for useful comments.  This project began in March 2010 at the AIM workshop on Localization Techniques in Equivariant Cohomology, and we thank William Fulton, Rebecca Goldin, and Julianna Tymoczko for organizing that meeting.

\section{Setup}

%
%

We  assume $G$ is an adjoint group, so that the simple roots $\Delta=\{\alpha_1,\ldots,\alpha_n\}$ form a basis for the character group of $T$.  We fix the basis $-\Delta=\{-\alpha_1,\ldots,-\alpha_n\}$ of \emph{negative} simple roots, and use it to identify $T$ with $(\CC^*)^n$.

\subsection{Equivariant cohomology}\label{ss:eq}

Let $\EE T \to \BB T$ be the universal principal $T$-bundle; that is, $\EE T$ is a contractible space with a free right $T$-action, and $\BB T = \EE T / T$.  By definition, the equivariant cohomology of a $T$-variety $Z$ is the ordinary (singular) cohomology of the \emph{Borel mixing space} $\EE T \times^T Z$.  (This notation means quotient by the relation $(e\cdot t, z) \sim (e, t\cdot z)$.)  While $\EE T$ is infinite-dimensional, it may be approximated by finite-dimensional smooth varieties.  We will set $\EE = (\CC^m\setminus\{0\})^n$, with $T=(\CC^*)^n$ acting by scaling each factor.  For fixed $k$ and $m\gg 0$, one has natural isomorphisms
\[
  H_T^*Z := H^*(\EE T\times^T Z) \isom H^*(\EE \times^T Z),
\]
so any given computation may be done with these approximation spaces.

Note that $\BB = \EE/T$ is isomorphic to $(\PP^{m-1})^n$.  For a $T$-variety $Z$, we will generally use calligraphic letters to denote the corresponding approximation space: $\Zz = \EE \times^T Z$, always understanding a suitably large fixed $m$.  This is a fiber bundle over $\BB$, with fiber $Z$.

For each $j=0,\ldots,m-1$, we fix transverse linear subspaces $\PP^{m-1-j}$ and $\tilde\PP^j$ inside $\PP^{m-1}$, and for each multi-index $J=(j_1,\ldots,j_n)$ with $0\leq j_i \leq m-1$, we set
\[
  \BB_J = \tilde\PP^{j_1} \times \cdots \times \tilde\PP^{j_n} \qquad \text{ and } \qquad
  \BB^J = \PP^{m-1-j_1} \times \cdots \times \PP^{m-1-j_n}.
\]
So $\dim\BB_J = \codim\BB^J = |J| := j_1 + \cdots + j_n$.  Similarly, write $\Zz_J = (\pi^T)^{-1}\BB_J$ and $\Zz^J = (\pi^T)^{-1}\BB^J$, where $\pi^T:\Zz\to\BB$ is the projection.  The notation is chosen to suggest an identification of the pushforward for this fiber bundle with the equivariant pushforward $\pi^T_*:H_T^*Z \to H_T^*(\pt)$.

Let $\Oo_i(-1)$ be the tautological bundle on the $i$th factor of $\BB=(\PP^{m-1})^n$.  The choice of basis $-\Delta$ for the character group of $T$ yields an equality $\alpha_i = c_1(\Oo_i(1))$.  If $\alpha=a_1\alpha_1+\cdots+a_n\alpha_n$ is a root, we will sometimes write $\Oo(\alpha) = \Oo_1(a_1)\otimes \cdots \otimes \Oo_n(a_n)$ for the corresponding line bundle, so $c_1(\Oo(\alpha))=\alpha$.  Note that $\Oo(\alpha)$ is globally generated if and only if $\alpha$ is a positive root.

From the definitions, we have
\[
 [\BB^J]=\alpha^J := \alpha_1^{j_1}\cdots \alpha_n^{j_n}
\]
in $H^*\BB$.  As a consequence, suppose $c= \sum_J c_J \alpha^J$ is an element of $H^*\BB = H_T^*(\pt)$, with $c_J\in\ZZ$.  Using Poincar\'e duality on $\BB$, we have $c_J = \pi^\BB_*(c \cdot [\BB_J])$, where $\pi^\BB$ is the map $\BB \to\pt$.

When $c=\pi^T_*(\sigma)$ comes from a class $\sigma \in H_T^*Z = H^*\Zz$ for a complete $T$-variety $Z$, we have
\begin{align}\label{eq:cj}
  c_J 
      &= \pi^\Zz_*(\sigma \cdot [\Zz_J] ),
\end{align}
using the projection formula and the fact that $(\pi^T)^*[\BB_J]=[\Zz_J]$.  (The latter holds since $\pi^T:\Zz\to \BB$ is flat; for a more general argument in the case where $Z$ is Cohen-Macaulay, see \cite[Lemma, p. 108]{fpr}.)

%
\subsection{Stable maps}

We briefly summarize some basic facts about the space of stable maps; proofs and details may be found in \cite{fp}.  As always, $X=G/P$.  The (coarse) moduli space $\Mbar = \Mbar_{0,r+1}(X,\dd)$ parametrizes data $(f,C,p_1,\ldots,p_{r+1})$, where $C$ is a connected nodal curve of genus $0$, and $f:C \to X$ is a map with $f_*[C] = \dd$ in $H_2(X,\ZZ)$.  (Stability means that any irreducible component of $C$ which is collapsed by $f$ has at least three ``special'' points, i.e., marked points $p_i$ or nodes.)

The space of stable maps is an irreducible projective variety of dimension
\[
  \dim \Mbar = \dim X + \langle c_1(TX), \dd \rangle + r-2,
\]
and has quotient singularities, and therefore rational singularities; in particular, it is Cohen-Macaulay.  The locus parametrizing maps with irreducible domain is a dense open subset $M=M_{0,r+1}(X,\dd) \subseteq \Mbar$, and the complement is a divisor $\bdy = \Mbar\setminus M$.

There are natural \emph{evaluation maps} $\ev_i:\Mbar \to X$, defined by sending a stable map $(f,C,p_1,\ldots,p_{r+1})$ to $f(p_i)$.  The group $G$ acts on $\Mbar$ by $g\cdot (f,C,\{p_i\}) = (g\cdot f,C,\{p_i\})$, and the evaluation maps are equivariant for the actions of $G$ on $\Mbar$ and $X$.  Considering the induced action of $T\subset G$, we obtain maps $\ev_i^T: \Mmbar \to \Xx$ on Borel mixing spaces, which commute with the projections to $\BB$.

\begin{remark}
The significance of $\Mbar$ being Cohen-Macaulay is that the usual apparatus of intersection theory applies; see especially Lemma~\ref{l:fp} below.  In fact, the corresponding moduli stack is smooth, so one could argue directly using intersection theory on stacks.
\end{remark}

\section{A group action}

In \cite{anderson} and \cite{agm}, a large group action on the mixing space $\Xx$ was constructed; we describe it here.  The idea is to mix the transitive action of $(PGL_m)^n$ on $\BB$ with a ``fiberwise'' action by Borel groups.  Let $T$ act on $G$ by conjugation, and let $\Gg = \EE \times^T G$ be the corresponding group scheme over $\BB$.  Because $T$ acts by conjugation, the evident action $(\EE \times G) \times_\EE (\EE \times X) \to \EE \times X$ descends to an action $\Gg \times_\BB \Xx \to \Xx$.

Let $U\subset B \subset G$ be the unipotent radical of $B$, and let $\Uu \subset \Bb \subset \Gg$ be the corresponding group bundles over $\BB$.  As a variety, $\Uu$ is isomorphic to the vector bundle $\bigoplus_{\alpha\in R^+} \Oo(\alpha)$ on $\BB$, where the sum is over the positive roots.  Now consider the group of sections $\Gamma_0 = \Hom_\BB(\BB,\Uu)$; this is a connected algebraic group over $\CC$.  As observed in \S\ref{ss:eq}, each $\Oo(\alpha)$ is globally generated.  It follows that for each $x\in \BB$, the map $\Gamma_0 \to \Uu_x$ given by evaluating sections at $x$ is surjective, and therefore we have:

\begin{lemma}[{\cite[Lemma~6.3]{agm}}]
Let $\Gamma$ be the \define{mixing group} $\Gamma_0 \rtimes (PGL_m)^n$, where $(PGL_m)^n$ acts on $\Gamma_0$ via its action on $\BB$.  Then $\Gamma$ is a connected linear algebraic group acting on $\Xx$, with (finitely many) orbits whose closures are the Schubert bundles $\Xx(w)$.
\end{lemma}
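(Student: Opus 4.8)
The plan is to verify the four assertions --- connectedness, linearity, finiteness of the orbits, and the identification of orbit closures --- essentially independently. For connectedness and linearity, I would first observe that, as a variety, $\Gamma_0=\Hom_\BB(\BB,\Uu)$ is the space of global sections of the vector bundle $\bigoplus_{\alpha\in R^+}\Oo(\alpha)$, hence a finite-dimensional affine space; equipped with the group law it inherits from the group-bundle $\Uu$, it is a connected unipotent linear algebraic group. Since $(PGL_m)^n$ is connected and linear and acts algebraically on $\Gamma_0$, the semidirect product $\Gamma=\Gamma_0\rtimes(PGL_m)^n$ is again a connected linear algebraic group.

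To produce the action on $\Xx$, the factor $(PGL_m)^n$ acts on $\EE=(\CC^m\setminus\{0\})^n$, hence on $\BB=\EE/T$ and on the associated bundle $\Xx=\EE\times^T X$, commuting with the projection $\Xx\to\BB$; the factor $\Gamma_0$ acts fiberwise, through the group-bundle action $\Gg\times_\BB\Xx\to\Xx$ restricted to sections of $\Uu\subset\Gg$. Because conjugating a section of $\Uu$ by an automorphism of $\BB$ again gives a section of $\Uu$, these two actions assemble into an algebraic action of $\Gamma$, which I would check directly from the definitions. The one point requiring care is that one realizes the $(PGL_m)^n$-action by lifting to $(GL_m)^n$, whose central torus acts on each fiber through $T\subset B$ rather than trivially; since $T$ preserves every Schubert cell, this ambiguity is harmless for the orbit structure discussed next.

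For the orbit structure, the key reduction is that $(PGL_m)^n$ acts transitively on $\BB$, so every $\Gamma$-orbit surjects onto $\BB$ and hence meets a fixed fiber $\Xx_{x_0}\isom X$; consequently the $\Gamma$-orbits correspond bijectively to the orbits on $\Xx_{x_0}$ of the stabilizer $\Gamma_{x_0}=\Gamma_0\rtimes\mathrm{Stab}_{(PGL_m)^n}(x_0)$, using that $\Gamma_0$ fixes $\BB$ pointwise. On this fiber, $\Gamma_0$ acts through the evaluation map $\Gamma_0\to\Uu_{x_0}\isom U$, which is surjective because each $\Oo(\alpha)$ is globally generated, so $\Gamma_0$ acts exactly as $U$ on $X=G/P$; and $\mathrm{Stab}_{(PGL_m)^n}(x_0)$ acts on the fiber only through $T$, since an element fixing $x_0$ scales a chosen lift $e_0\in\EE$ by some $t\in T$ and therefore acts as $t$ on $X$. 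As $T$ normalizes $U$ and lies in $B$, the combined orbits on the fiber are precisely the $U$-orbits on $G/P$, namely the Schubert cells $BwP/P=UwP/P$ for $w\in W^P$; there are finitely many. The corresponding $\Gamma$-orbit is then the sub-bundle $\EE\times^T(BwP/P)$, and taking closures commutes with the associated-bundle construction, so its closure is $\EE\times^T\overline{BwP/P}=\EE\times^T X(w)=\Xx(w)$, as claimed.

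I expect the main obstacle to be the fiberwise bookkeeping in this last step: reducing cleanly to a single fiber, identifying the $\Gamma_0$-action there with the $U$-action via surjectivity of $\Gamma_0\to\Uu_{x_0}$, and checking that the base-point stabilizer contributes only the torus $T$, so that the Schubert-cell decomposition is neither coarsened nor refined. The surjectivity itself, which drives the argument, is already in hand from global generation of the $\Oo(\alpha)$; the remaining care lies in the central-torus subtlety and in verifying that passing to closures is compatible with the fiber-bundle structure over $\BB$.
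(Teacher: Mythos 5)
Your proposal is correct and takes essentially the same approach as the source: the paper does not actually reprove this lemma but cites \cite[Lemma~6.3]{agm}, and the sketch it gives just before the statement---$\Gamma_0=\Hom_\BB(\BB,\Uu)$ is a connected algebraic group, and global generation of each $\Oo(\alpha)$ makes every evaluation map $\Gamma_0\to\Uu_x$ surjective---is precisely the skeleton that your fiber-reduction argument (transitivity of the base action, identification of the fiber orbits with the $U$-orbits $BwP/P$, passage to closures) fleshes out. Your remark about the central torus is a genuine catch: strictly speaking it is $\Gamma_0\rtimes(GL_m)^n$ that acts on $\Xx$, since the center of $(GL_m)^n$ acts on the fibers through $T$ rather than trivially, and your resolution---that this only contributes fiberwise $T$-translations, which preserve each Schubert cell and hence leave the orbit structure unchanged---is the correct way to reconcile the construction with the statement as written.
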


Similarly, the group $\Gamma^{(r)} = \Gamma_0^r \rtimes (PGL_m)^n$ acts on the $r$-fold fiber product $\Xx \times_\BB \cdots \times_\BB \Xx$, with orbit closures $\Xx(w_1)\times_\BB \cdots \times_\BB \Xx(w_r)$.

\section{Transverality}\label{s:transverse}

A map $f:Y \to X$ is said to be \define{dimensionally transverse} to a subvariety $W \subseteq X$ if $\codim_Y(f^{-1}W) = \codim_X(W)$.  We will need the following version of Kleiman's transversality theorem; see \cite{kleiman} and \cite{speiser}.  As a matter of notation, if a group $\Gamma$ acts on $X$, we write $\gamma f: \gamma Y \to X$ for the composition $Y\xrightarrow{f} X \xrightarrow{\cdot\gamma} X$, i.e., the translation of $f$ by the action of $\gamma\in\Gamma$.

\begin{proposition}\label{p:transverse}
Let $\Gamma$ be a group acting on a smooth variety $X$, and suppose $f:Y \to X$ is dimensionally transverse to the orbits of $\Gamma$.  Assume $Y$ is Cohen-Macaulay.  Let $g: Z \to X$ be any map.  Then for a general element $\gamma\in\Gamma$, the fiber product $V_\gamma = \gamma Y \times_X Z$ has dimension equal to $\dim Y + \dim Z - \dim X$.
\end{proposition}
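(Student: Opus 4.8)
The plan is to reduce Proposition~\ref{p:transverse} to a standard Kleiman--Bertini-type transversality statement by exploiting the hypothesis that $f$ is dimensionally transverse to the $\Gamma$-orbits. First I would recall the precise content of the Kleiman transversality theorem in the form available from \cite{kleiman} and \cite{speiser}: if $\Gamma$ acts on a smooth variety $X$ with finitely many orbits (or more generally in a way that the orbit stratification is well-behaved), and $f\colon Y\to X$ and $g\colon Z\to X$ are maps, then for general $\gamma\in\Gamma$ the translate $\gamma Y$ meets $Z$ properly over each orbit stratum, provided one controls how $f$ and $g$ interact with the orbits. The key input we are given is that $\codim_Y(f^{-1}W)=\codim_X(W)$ for every orbit $W$, which is exactly the condition ensuring $f$ does not concentrate dimension on any stratum.

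The main steps, in order, are as follows. First, stratify $X$ by the $\Gamma$-orbits $X=\bigsqcup_i O_i$, so that $\Gamma$ acts transitively on each $O_i$ and $O_i$ is smooth. Second, fix a stratum $O_i$ and restrict attention to the pieces $f^{-1}(O_i)\subseteq Y$ and $g^{-1}(O_i)\subseteq Z$; over the homogeneous space $O_i$ the classical Kleiman transversality theorem applies, giving that for general $\gamma$ the fiber product $(\gamma Y)\times_X Z$ restricted over $O_i$ has the expected dimension $\dim f^{-1}(O_i)+\dim g^{-1}(O_i)-\dim O_i$. Third, I would use the dimensional transversality hypothesis to bound $\dim f^{-1}(O_i)=\dim Y-\codim_X(O_i)$, and combine this with $\dim g^{-1}(O_i)\le \dim Z$ to conclude that each stratum contributes at most $\dim Y+\dim Z-\dim X$ to $\dim V_\gamma$. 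Fourth, since there are only finitely many strata and $V_\gamma$ is covered by these locally closed pieces, taking a common general $\gamma$ (a finite intersection of dense open conditions) yields $\dim V_\gamma\le \dim Y+\dim Z-\dim X$; the reverse inequality is automatic whenever $V_\gamma$ is nonempty because $X$ is smooth and $Y$ is Cohen--Macaulay, so no component of the fiber product can have smaller dimension than expected.

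The hard part will be the clean bookkeeping that turns the stratum-by-stratum Kleiman estimate into the global dimension count, and in particular justifying the lower bound via the Cohen--Macaulay hypothesis. The point is that Cohen--Macaulayness of $Y$ (hence of $\gamma Y$) guarantees that $\gamma Y\times_X Z$ has no components of dimension exceeding the expected one when the intersection with the smooth $X$ is proper, and conversely that every nonempty component has dimension at least $\dim Y+\dim Z-\dim X$; this is where one invokes the dimension theory of fiber products over a smooth base, e.g.\ \cite[Lemma, p.~108]{fpr} as cited earlier for the Cohen--Macaulay setting. I would be careful to handle the possibility that $V_\gamma$ is empty (in which case the statement is vacuous or interpreted as dimension $-\infty$), and to note that genericity of $\gamma$ is preserved under the finite intersection over all strata, which uses only that $\Gamma$ is irreducible as an algebraic group so that a finite intersection of dense opens remains dense.
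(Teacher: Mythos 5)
Your stratification argument has a genuine gap at the step ``taking a common general $\gamma$ (a finite intersection of dense open conditions).'' Applying Kleiman's theorem over one orbit $O_i$ produces a dense open $U_i\subseteq\Gamma$ of good translates, and your argument needs $\gamma\in\bigcap_i U_i$; this intersection is only known to be dense (or even nonempty) when the set of orbits is \emph{finite}. Proposition~\ref{p:transverse} makes no such assumption, and the hypothesis of dimensional transversality to orbits is perfectly meaningful for actions with infinitely many orbits --- for instance a $\GG_m$-action on a surface whose generic orbits form a one-parameter family of curves, or the degenerate case of a trivial action, where the hypothesis just says that $f$ is equidimensional; the proposition remains true in those cases. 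With an uncountable family of orbits, $\bigcap_i U_i$ can fail to contain any dense open set and can even be empty, so your parenthetical hedge ``or more generally in a way that the orbit stratification is well-behaved'' is exactly the assertion that needs proof, not a harmless weakening. (A secondary remark: your lower bound does not actually use Cohen--Macaulayness of $Y$; smoothness of $X$ --- the diagonal being a regular embedding --- together with pure-dimensionality already forces every nonempty component of $\gamma Y\times_X Z$ to have at least the expected dimension. The Cohen--Macaulay hypothesis is needed elsewhere, which is a hint that the intended argument is different.)

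The missing idea is the one the paper isolates as ``the essential point'': the hypotheses imply that the single map $a\colon\Gamma\times Y\to X$, $(\gamma,y)\mapsto\gamma\cdot f(y)$, is \emph{flat}, and this needs no stratification into finitely many pieces. Indeed, the fiber of $a$ over $x$ maps to $f^{-1}(\Gamma\cdot x)$ with fibers that are cosets of stabilizers, so dimensional transversality to every orbit forces all fibers of $a$ to have the same dimension $\dim\Gamma+\dim Y-\dim X$; since $\Gamma\times Y$ is Cohen--Macaulay (this is where CM enters) and $X$ is smooth, miracle flatness gives flatness of $a$. Base change along $g$ then makes $(\Gamma\times Y)\times_X Z\to Z$ flat of the same relative dimension, so the total space has dimension $\dim\Gamma+\dim Y+\dim Z-\dim X$, and the theorem on generic fiber dimension applied to the projection to $\Gamma$ yields the conclusion for all $\gamma$ in a dense open subset, uniformly over all orbits at once. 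This is the Kleiman--Speiser family argument of \cite{kleiman} and \cite{speiser}. Your route would in fact suffice for the application in Lemma~\ref{l:intersect}, since the mixing group $\Gamma^{(r)}$ acts on $\Xx^r$ with finitely many orbits, but it does not prove Proposition~\ref{p:transverse} as stated.
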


The essential point in the proof is that the hypotheses imply the map $\Gamma \times Y \to X$ is flat.

We will also use the following lemma:
\begin{lemma}[{\cite[Lemma, p.~108]{fpr}}]\label{l:fp}
Let $f:Z \to X$ be a morphism from a pure-dimensional Cohen-Macaulay scheme $Z$ to a nonsingular variety $X$, and let $W\subseteq X$ be a closed Cohen-Macaulay subscheme of pure codimension $d$.  Let $V = f^{-1}W$, and assume $\codim_Z(V)=d$.  Then $V$ is Cohen-Macaulay, and $f^*[W]=[V]$.
\end{lemma}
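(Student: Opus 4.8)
The plan is to reduce the statement to a proper intersection with the diagonal, where the Cohen--Macaulay hypotheses force every intersection multiplicity to equal $1$. Set $n=\dim X$ and let $j\colon W\hookrightarrow X$ denote the inclusion. Since $V=f^{-1}W=Z\times_X W$, I would first exhibit $V$ as the fiber product
\[
  V = (Z\times W)\times_{X\times X} X,
\]
where the map $Z\times W\to X\times X$ is $(f,j)$ and $X\to X\times X$ is the diagonal $\delta$. Because $X$ is nonsingular, $\delta$ is a regular embedding of codimension $n$. Pulling back local equations that cut out $\delta$, one obtains $n$ functions $t_1,\dots,t_n$ on $Z\times W$ whose common zero scheme is exactly $V$.

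Next I would record the dimension bookkeeping. As $Z$ is pure-dimensional Cohen--Macaulay and $W$ is Cohen--Macaulay of pure codimension $d$, the product $Z\times W$ is Cohen--Macaulay and pure-dimensional of dimension $\dim Z+\dim X-d$ (a product of Cohen--Macaulay schemes over a field is Cohen--Macaulay). The expected dimension of its intersection with $\delta$ is therefore $(\dim Z+\dim X-d)-n=\dim Z-d$, and the hypothesis $\codim_Z V=d$ says precisely that $\dim V=\dim Z-d$. Hence the intersection is \emph{proper}: the $t_i$ cut out a subscheme of codimension $n$ in $Z\times W$.

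The key algebraic input is then the Cohen--Macaulay ``miracle'': $n$ elements cutting out a codimension-$n$ subscheme of a Cohen--Macaulay scheme automatically form a regular sequence. This already yields that $V$ is Cohen--Macaulay, the first assertion. Moreover the Koszul complex on $(t_1,\dots,t_n)$ then resolves $\mathcal{O}_V$ over $\mathcal{O}_{Z\times W}$, so the higher Tor sheaves $\mathrm{Tor}^{\mathcal{O}_{X\times X}}_i(\mathcal{O}_\delta,\mathcal{O}_{Z\times W})$ vanish for $i>0$. By Serre's multiplicity formula the intersection multiplicity of $\delta$ with $Z\times W$ along each component $V_i$ is thus $\mathrm{length}\,\mathcal{O}_{V,\eta_i}$ at the generic point $\eta_i$, so the refined intersection class $\delta^{!}[Z\times W]$ coincides with the fundamental cycle $[V]=\sum_i(\mathrm{length}\,\mathcal{O}_{V,\eta_i})[V_i]$. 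Since reduction to the diagonal defines the refined Gysin pullback by $f^{*}[W]=\delta^{!}[Z\times W]$, this gives $f^{*}[W]=[V]$.

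The main obstacle is the commutative-algebra step: verifying that a proper, codimension-$n$ intersection cut by $n$ equations inside a Cohen--Macaulay scheme is genuinely a regular-sequence quotient, so that the higher Tor's vanish and all multiplicities are $1$. Everything else — the dimension count and the formal bookkeeping of reduction to the diagonal, including the fact that $(f,j)$ need not be a closed embedding — is routine once this regular-sequence fact is in hand.
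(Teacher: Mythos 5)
The paper itself gives no proof of this lemma---it is quoted directly from Fulton--Pragacz \cite{fpr}---so the comparison is with the standard argument in that reference, and your proposal is essentially that argument: reduction to the diagonal (equivalently, to the graph $\gamma_f\colon Z\to Z\times X$, which is how Fulton defines $f^*$ when $X$ is smooth), a dimension count making the intersection proper, and the Cohen--Macaulay ``miracle'' that $n$ elements cutting out a codimension-$n$ subscheme of a Cohen--Macaulay scheme form a regular sequence, which simultaneously yields the Cohen--Macaulayness of $V$ and multiplicity one along every component. Two small points of care, neither of which is a gap. First, properness must hold along \emph{every} component of $V$: Krull's height theorem gives dimension at least $\dim Z-d$ for each component of the zero scheme of $(t_1,\dots,t_n)$, and your hypothesis $\codim_Z V=d$ gives the matching upper bound, so $V$ is pure-dimensional; you use this implicitly when localizing at generic points. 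Second, the appeal to ``Serre's multiplicity formula'' is slightly misplaced as stated: $\mathcal{O}_{Z\times W}$ is not a sheaf on $X\times X$ and $(f,j)$ is not finite, so the groups $\mathrm{Tor}^{\mathcal{O}_{X\times X}}_i(\mathcal{O}_\delta,\mathcal{O}_{Z\times W})$ should be interpreted as the Koszul homology of $(t_1,\dots,t_n)$ acting on $\mathcal{O}_{Z\times W}$, which vanishes in positive degrees precisely by your regular-sequence step; alternatively one can avoid Tor altogether, either by noting that a regular sequence forces the normal cone $C_V(Z\times W)$ to equal the restricted normal bundle $N|_V$, whence $\delta^![Z\times W]=\bigl\{c(N)\cap s(N|_V)\bigr\}_{\dim Z-d}=[V]$, or by citing Fulton's \emph{Intersection Theory}, Proposition~7.1 and Example~7.1.5, which say that in a proper intersection along a regular embedding the multiplicity along a component equals the length of the scheme-theoretic intersection at its generic point whenever the local ring of $Z\times W$ there is Cohen--Macaulay. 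These are presentational fixes; the structure and key steps of your proof are correct and match the cited source.
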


Now resume the previous notation, so $X=G/P$ and $\Mbar=\Mbar_{0,r+1}(X,\dd)$.  Since each evaluation map $\ev_{i}:\Mbar \to X$ is $G$-equivariant, it is flat.  If $W\subseteq X$ is any Cohen-Macaulay subscheme of codimension $d$, it follows that $\ev_i^{-1}W\subseteq\Mbar$ has the same properties, and similarly, $(\ev_i^T)^{-1}\Ww \subseteq \Mmbar$.  In particular, the subscheme
\[
  \Zz = (\ev_{r+1}^T)^{-1}(\Xx(w)) \subseteq \Mmbar
\]
is Cohen-Macaulay of codimension $\dim X-\ell(w)$, and we have $[\Zz]=(\ev_{r+1}^T)^*(x(w))$ by Lemma~\ref{l:fp}.  Similarly, we have
\begin{align}\label{eq:zj}
  [\Zz_J] = (\ev_{r+1}^T)^*(x(w))\cdot[\Mmbar_J]
\end{align}

Consider the map $\ev=\ev_1\times\cdots\times\ev_r:\Mbar \to X^{r}$ and the corresponding map on mixing spaces $\ev^T: \Mmbar \to \Xx^{r}$.  Let $\Yy = \Yy(v_1)\times_\BB \times \cdots \times_\BB \Yy(v_r)$, and let $f$ be the inclusion of $\Yy$ in the $r$-fold fiber product $\Xx^{r}$. 

\begin{lemma}\label{l:intersect}
Let $\gamma = (\gamma_1,\ldots,\gamma_r)$ be a general element in $\Gamma^{(r)}$.

\begin{enumerate}
\item The intersection
\begin{align*}
  V_\gamma &= (\ev^T_1)^{-1}(\gamma_1\Yy(v_1)) \cap \cdots \cap (\ev^T_r)^{-1}(\gamma_r\Yy(v_r)) \cap \Zz_J \\
   &= \gamma\Yy \times_{\Xx^r} \Zz_J
\end{align*}
is Cohen-Macaulay and pure-dimensional, of dimension $\dim\Mbar+|J|-\dim X+\ell(w)-\ell(v_1)-\cdots-\ell(v_r)$.  (In the fiber product, $\Zz_J$ maps to $\Xx^r$ by the restriction of $\ev^T$.)\label{inta}

\medskip

\item Similarly, the intersection
\begin{align*}
  \partial V_\gamma &= (\ev^T_1)^{-1}(\gamma_1\Yy(v_1)) \cap \cdots \cap (\ev^T_r)^{-1}(\gamma_r\Yy(v_r)) \cap \Zz_J \cap \bbdy \\
   &= \gamma\Yy \times_{\Xx^r}(\Zz_J \cap \bbdy)
\end{align*}
has pure dimension $\dim\Mbar+|J|-\dim X+\ell(w)-\ell(v_1)-\cdots-\ell(v_r)-1$. \label{intb}
\end{enumerate}

In particular, when $\dim\Mbar+|J|-\dim X+\ell(w)-\ell(v_1)-\cdots-\ell(v_r)=0$, the intersection $V_\gamma$ consists of finitely many points contained in $\Mm$.
\end{lemma}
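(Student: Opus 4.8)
The plan is to apply Proposition~\ref{p:transverse} (Kleiman transversality) to the evaluation map $\ev^T:\Mmbar\to\Xx^r$ and the orbit closures of the mixing group $\Gamma^{(r)}$, and then combine the resulting generic-dimension statement with Lemma~\ref{l:fp} to establish both the dimension count and the Cohen-Macaulay/pure-dimensional conclusions. The key inputs I would assemble first are: (i) each $\ev_i^T$ is flat, hence so is the product map $\ev^T$, and since $\Mmbar$ is Cohen-Macaulay the fiber $\Zz = (\ev_{r+1}^T)^{-1}(\Xx(w))$ and its intersection $\Zz_J$ with the linear-space preimages $\Mmbar_J$ remain Cohen-Macaulay of the expected codimension; (ii) the subvarieties $\gamma_i\Yy(v_i)$ are translates of the $\Gamma^{(r)}$-orbit closures $\Yy(v_1)\times_\BB\cdots\times_\BB\Yy(v_r)$, which are themselves Cohen-Macaulay of codimension $\sum\ell(v_i)$ in $\Xx^r$ (each $\Yy(v_i)$ has codimension $\ell(v_i)$ in $\Xx$).

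For part~(a), I would set $Y=\Yy$ (with its inclusion $f$) and $g=\ev^T|_{\Zz_J}:\Zz_J\to\Xx^r$ in Proposition~\ref{p:transverse}. The transversality hypothesis requires that the inclusion of $\Yy$ be dimensionally transverse to the $\Gamma^{(r)}$-orbits; since $\Yy$ is itself an orbit closure this is automatic on the open orbit, and one passes to a dense open to meet the hypothesis, or one invokes the Bertini-type version of Kleiman for orbit-closure translates. Proposition~\ref{p:transverse} then gives that for general $\gamma$, the fiber product $V_\gamma=\gamma\Yy\times_{\Xx^r}\Zz_J$ has dimension exactly $\dim\Zz_J+\dim\Yy-\dim\Xx^r$. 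A bookkeeping computation converts this into the stated formula: $\dim\Zz_J = \dim\Mmbar - (\dim X-\ell(w)) + |J|$ (subtracting the codimension of $\Zz$ and adding the $|J|$-dimensional fiber $\Mmbar_J$), while $\codim_{\Xx^r}\Yy = \sum_i\ell(v_i)$, and the fiber bundle structure over $\BB$ makes the relevant codimensions agree with the absolute ones, yielding $\dim\Mbar+|J|-\dim X+\ell(w)-\sum_i\ell(v_i)$. To upgrade from this dimension count to the Cohen-Macaulay/pure-dimensional assertion, I would apply Lemma~\ref{l:fp}: view $V_\gamma$ as the preimage $g^{-1}(\gamma\Yy)$ where $g$ maps the Cohen-Macaulay scheme $\Zz_J$ to the smooth variety $\Xx^r$, with $\gamma\Yy$ Cohen-Macaulay of the correct codimension; the matching codimension (forced by the dimension count just obtained) lets Lemma~\ref{l:fp} conclude that $V_\gamma$ is Cohen-Macaulay.

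For part~(b), the same machinery applies with $\Zz_J$ replaced by $\Zz_J\cap\bbdy$. The boundary $\bbdy$ is a divisor in $\Mmbar$, and on the mixing space it remains a Cohen-Macaulay subscheme of codimension one (the boundary of the moduli space of stable maps is a normal-crossings-type divisor, and its intersection with the Cohen-Macaulay $\Zz_J$ is again Cohen-Macaulay of codimension one by Lemma~\ref{l:fp}); thus $\dim(\Zz_J\cap\bbdy)=\dim\Zz_J-1$, and repeating the transversality argument drops the dimension of the generic fiber product by exactly one, giving the stated figure. Finally, the ``in particular'' clause follows immediately: when the expected dimension in~(a) is $0$, $V_\gamma$ is a finite set of points, and by~(b) the locus $V_\gamma\cap\bbdy$ has dimension $-1$, i.e.\ is empty, so all the points lie in the interior $\Mm=\Mmbar\setminus\bbdy$.

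The main obstacle I anticipate is verifying the transversality hypothesis of Proposition~\ref{p:transverse} cleanly: one must ensure that the inclusion of the fiber-product orbit closure $\Yy$ into $\Xx^r$ is dimensionally transverse to \emph{all} $\Gamma^{(r)}$-orbits simultaneously (not merely the open one), and confirm that $\Gamma^{(r)}$ genuinely acts with the claimed orbit closures so that Kleiman applies in the relative/mixing-space setting over $\BB$. A secondary technical point is ensuring that the codimension bound coming from Kleiman is an \emph{equality} rather than an inequality for the boundary piece in~(b), which is what forces the application of Lemma~\ref{l:fp} (requiring exact codimension) to go through and deliver the Cohen-Macaulay conclusion; keeping the fiber-bundle codimension bookkeeping over $\BB$ consistent throughout is where the care is needed.
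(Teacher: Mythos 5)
Your overall architecture matches the paper's proof: apply Proposition~\ref{p:transverse} with $Y=\Yy$ and $g=\ev^T|_{\Zz_J}$, get the dimension count, use Lemma~\ref{l:fp} to upgrade to Cohen-Macaulayness, and repeat with $\Zz_J\cap\bbdy$ for part~(b). But there is a genuine gap at exactly the point you yourself flag as the main obstacle: the verification that $f:\Yy\hookrightarrow\Xx^r$ is dimensionally transverse to the $\Gamma^{(r)}$-orbits. Your proposed justification --- that ``$\Yy$ is itself an orbit closure,'' so transversality is ``automatic on the open orbit,'' with a passage to a dense open set --- is false, and no small repair along those lines works. The orbit closures of $\Gamma^{(r)}$ on $\Xx^r$ are the Schubert bundles $\Xx(w_1)\times_\BB\cdots\times_\BB\Xx(w_r)$, built from the $B$-side Schubert varieties $X(w)=\bar{BwP/P}$, whereas $\Yy=\Yy(v_1)\times_\BB\cdots\times_\BB\Yy(v_r)$ is built from the \emph{opposite} Schubert varieties $Y(v)=\bar{B^-vP/P}$; the latter are not $\Gamma^{(r)}$-invariant, and this is essential. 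Indeed, if $\Yy$ were a $\Gamma^{(r)}$-orbit closure, the strategy would collapse: any orbit $O$ meeting $\Yy$ would lie inside it, giving $\codim_\Yy(O)=\dim\Yy-\dim O<\codim_{\Xx^r}(O)$, so dimensional transversality would \emph{fail} rather than hold automatically, and moreover $\gamma\Yy=\Yy$ for every $\gamma$, so translating by a general element would accomplish nothing. Passing to a dense open subset of $\Yy$ is also not permitted: dimensional transversality is a condition on the preimages of \emph{all} orbits, including those meeting $\Yy$ only in lower-dimensional strata, and discarding part of $\Yy$ changes the fiber product $V_\gamma$ whose dimension you are trying to control.

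The missing ingredient, which the paper supplies in one sentence, is the classical properness of intersections between Schubert and opposite Schubert varieties: in $G/P$, the Richardson variety $Y(v)\cap X(w)$ has dimension $\ell(w)-\ell(v)$ whenever nonempty, so $Y(v)$ meets each Schubert cell --- hence each $\Gamma$-orbit, fiberwise over $\BB$ --- in codimension equal to the codimension of that orbit in $X$. This is precisely the statement that $f:\Yy\hookrightarrow\Xx^r$ is dimensionally transverse to every $\Gamma^{(r)}$-orbit, and it is what licenses the application of Proposition~\ref{p:transverse}. With that input restored, the rest of your argument is sound and agrees with the paper: the codimension bookkeeping, the use of Lemma~\ref{l:fp} (with $W=\gamma\Yy$ Cohen-Macaulay of pure codimension $\sum_i\ell(v_i)$ in the nonsingular $\Xx^r$) to conclude $V_\gamma$ is Cohen-Macaulay, the boundary case (where the paper additionally notes that $\bdy$ is $G$-invariant, so that $\Zz_J\cap\bbdy$ is a Cohen-Macaulay divisor in $\Zz_J$ by the same flatness argument), and the ``dimension $-1$ means empty'' step yielding the final clause.
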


\begin{proof}
Note that $\Zz_J$ is Cohen-Macaulay (since $\Zz$ is), of dimension $\dim\Mbar + |J| - \dim X + \ell(w)$.  Each opposite Schubert bundle $\Yy(v)$ intersects each $\Gamma$-orbit closure $\Xx(w)$ properly, so the map $f:\Yy \hookrightarrow \Xx^r$ is dimensionally transverse to the $\Gamma^{(r)}$-orbits.  The first statement follows by an application of Proposition~\ref{p:transverse}.

The second statement is proved similarly; note that the divisor $\bdy$ is Cohen-Macaulay and $G$-invariant, and the same argument as before shows that $\Zz_J \cap \bbdy$ is a Cohen-Macaulay divisor in $\Zz_J$.
\end{proof}

We can now prove Theorem~\ref{t:main}.  In fact, it follows immediately from \eqref{eq:cj}, together with a more precise statement.

\begin{theorem}\label{t:count}
Write $I^T_\dd(y(v_1)\cdots y(v_r)\cdot x(w)) = \sum c_J \alpha^J$ in $H_T^*(\pt)$.  Then, with notation as in Lemma~\ref{l:intersect}, we have
\[
  c_J = \deg( V_\gamma )
\]
when $\dim\Mbar+|J|-\dim X+\ell(w)-\ell(v_1)-\cdots-\ell(v_r)=0$, and $c_J=0$ otherwise.

In particular, since $V_\gamma$ is an effective cycle, $c_J$ is a nonnegative integer.
\end{theorem}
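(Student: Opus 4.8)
The plan is to read off each coefficient $c_J$ from the Poincar\'e-duality formula \eqref{eq:cj} and then recognize the resulting class as that of the effective cycle $V_\gamma$ produced in Lemma~\ref{l:intersect}. Applying \eqref{eq:cj} with $Z=\Mbar$ and
\[
  \sigma = \ev_1^*y(v_1)\cdots\ev_r^*y(v_r)\cdot\ev_{r+1}^*x(w)
\]
gives $c_J = \pi^{\Mmbar}_*\bigl(\sigma\cdot[\Mmbar_J]\bigr)$, where $\pi^{\Mmbar}\colon\Mmbar\to\pt$ is the (proper) structure map. Since this pushforward lands in $H^*(\pt)=\ZZ$, it is nonzero only when $\sigma\cdot[\Mmbar_J]$ is a top-degree class on $\Mmbar$; a direct degree count shows this forces $\dim\Mbar+|J|-\dim X+\ell(w)-\ell(v_1)-\cdots-\ell(v_r)=0$. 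This already disposes of the ``otherwise'' case: for any other $J$ the class $\sigma\cdot[\Mmbar_J]$ sits in the wrong degree, so $c_J=0$.

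It remains to identify $\sigma\cdot[\Mmbar_J]$ with $[V_\gamma]$ when the dimension quantity vanishes. First I would use \eqref{eq:zj} to absorb the last two factors, writing $\ev_{r+1}^*x(w)\cdot[\Mmbar_J]=[\Zz_J]$. For the remaining factors, I would exploit that $\Gamma^{(r)}$ is connected, so translation does not change cohomology classes and $y(v_i)=[\gamma_i\Yy(v_i)]$ for every $\gamma_i$; combined with flatness of $\ev_i^T$ and Lemma~\ref{l:fp}, this yields $\ev_i^*y(v_i)=[(\ev_i^T)^{-1}(\gamma_i\Yy(v_i))]$. Writing $\gamma\Yy=\gamma_1\Yy(v_1)\times_\BB\cdots\times_\BB\gamma_r\Yy(v_r)$, I would then apply Lemma~\ref{l:fp} once more to the restricted evaluation map $\ev^T\colon\Zz_J\to\Xx^r$ (whose target is nonsingular) and the Cohen--Macaulay subvariety $\gamma\Yy$: Lemma~\ref{l:intersect}\eqref{inta} guarantees that for general $\gamma$ the preimage $V_\gamma=\gamma\Yy\times_{\Xx^r}\Zz_J$ has the expected codimension, so the hypotheses of Lemma~\ref{l:fp} are met and $\sigma\cdot[\Mmbar_J]=[V_\gamma]$.

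With this identification in hand the conclusion is immediate. When $\dim\Mbar+|J|-\dim X+\ell(w)-\ell(v_1)-\cdots-\ell(v_r)=0$, Lemma~\ref{l:intersect}\eqref{inta} says $V_\gamma$ is a zero-dimensional Cohen--Macaulay subscheme, and \eqref{intb} (its boundary having negative expected dimension) places it entirely inside $\Mm$; thus $V_\gamma$ is a genuine effective zero-cycle and
\[
  c_J=\pi^{\Mmbar}_*[V_\gamma]=\deg(V_\gamma)\ge 0
\]
is a nonnegative integer, as claimed. The main obstacle is the middle step: turning the product of pulled-back Schubert classes into the fundamental class of a single scheme $V_\gamma$. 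This requires threading together the connectedness of $\Gamma^{(r)}$ (to replace $\Yy(v_i)$ by generic translates), the flatness of the evaluation maps, and the Cohen--Macaulay and expected-dimension output of Lemma~\ref{l:intersect}, so that each invocation of Lemma~\ref{l:fp} is legitimate; once these hypotheses are verified, the positivity and vanishing statements follow formally.
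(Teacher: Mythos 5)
Your proposal is correct and follows essentially the same route as the paper: both start from \eqref{eq:cj}, use the connectedness of $\Gamma^{(r)}$ to replace $\Yy$ by a general translate, absorb $\ev_{r+1}^*x(w)\cdot[\Mmbar_J]$ via \eqref{eq:zj}, and invoke Lemma~\ref{l:fp} together with the dimension count of Lemma~\ref{l:intersect} to identify the resulting class with $[V_\gamma]$. Your two small variations --- disposing of the ``otherwise'' case by an explicit degree count, and applying Lemma~\ref{l:fp} to the restricted map $\ev^T\colon\Zz_J\to\Xx^r$ rather than asserting the proper-intersection product $[(\ev^T)^{-1}(\gamma\Yy)]\cdot[\Zz_J]=[V_\gamma]$ on $\Mmbar$ --- are cosmetic and, if anything, make the final step slightly more self-contained.
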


\begin{proof}
Using \eqref{eq:cj} from \S\ref{ss:eq}, we have
\[
  c_J = \pi^{\Mmbar}_*((\ev_1^T)^*y(v_1)\cdots (\ev_r^T)^*y(v_r) \cdot (\ev_{r+1}^T)^*x(w) \cdot [\Mmbar_J]).
\]
The claim is that $(\ev_1^T)^*y(v_1)\cdots (\ev_r^T)^*y(v_r) \cdot (\ev_{r+1}^T)^*x(w) \cdot [\Mmbar_J] = [V_\gamma]$ in $H^*\Mmbar$.

First observe that $(\ev_1^T)^*y(v_1)\cdots (\ev_r^T)^*y(v_r) = (\ev^T)^*(y(v_1)\times \cdots \times y(v_r))$.  Since $\Gamma^{(r)}$ is connected, we have $[\gamma\Yy] = [\Yy] = y(v_1)\times \cdots \times y(r)$ in $H^*(\Xx^r) = H_T^*(X^r)$.  By the same argument as in the paragraph after Lemma~\ref{l:fp}, we have $[(\ev^T)^{-1}(\gamma\Yy)] = (\ev^T)^*(y(v_1)\times \cdots \times y(v_r))$.

By \eqref{eq:zj}, we have $[\Zz_J] = (\ev^T_{r+1})^*x(w)\cdot[\Mmbar_J]$.  Since $(\ev^T)^{-1}(\gamma\Yy)$ and $\Zz_J$ intersect properly in $V_\gamma$ by Lemma~\ref{l:intersect}, we have $[(\ev^T)^{-1}(\gamma\Yy)]\cdot [\Zz_J] = [V_\gamma]$, as desired.
\end{proof}

\begin{remark}\label{r:genx}
Let $\Mbar_{0,r+1}$ be the moduli space of stable curves with $r+1$ marked points; this is a nonsingular projective variety of dimension $r-2$.  Since $T$ acts trivially on this space, the corresponding mixing space is $\Mmbar_{0,r+1} = \BB\times\Mbar_{0,r+1}$.  The forgetful map $\phi:\Mbar \to \Mbar_{0,r+1}$ induces a map $\Mmbar \to \Mmbar_{0,r+1}$.  Let $\tilde\phi:\Mmbar \to \Mbar_{0,r+1}$ be the composition with the second projection, and for $x\in \Mbar_{0,r+1}$, write $\Mmbar(x)=\tilde\phi^{-1}(x)$.  Using the notation of Lemma~\ref{l:intersect}, the same arguments used in the proof of the lemma also establish the following dimension counts:
\begin{enumerate}
\item Let $V_\gamma(x) = V_\gamma\cap\Mmbar(x)$.  Then $V_\gamma(x)$ is Cohen-Macaulay, of pure dimension $\dim\Mbar+|J|-(\dim X-\ell(w))-\ell(v_1)-\cdots-\ell(v_r)-(r-2)$.

\medskip

\item Let $\partial V_\gamma(x) = \partial V_\gamma\cap\Mmbar(x)$.  Then $\partial V_\gamma(x)$ is Cohen-Macaulay, of pure dimension $\dim\Mbar+|J|-(\dim X-\ell(w))-\ell(v_1)-\cdots-\ell(v_r)-(r-2)-1$.
\end{enumerate}
\end{remark}



\end{document}